
\documentclass[11pt,reqno]{amsart}
\usepackage{amsmath,amsthm,amscd,amsfonts,amssymb,color}
\usepackage{cite}
\usepackage[bookmarksnumbered,colorlinks,plainpages]{hyperref}

\setcounter{MaxMatrixCols}{10}

\textheight 22.5truecm \textwidth 14.5truecm
\setlength{\oddsidemargin}{0.35in}\setlength{\evensidemargin}{0.35in}
\setlength{\topmargin}{-.5cm}
\newtheorem{theorem}{Theorem}[section]
\newtheorem{lemma}[theorem]{Lemma}

\theoremstyle{definition}
\newtheorem{definition}[theorem]{Definition}
\newtheorem{example}[theorem]{Example}
\theoremstyle{remark}
\newtheorem{remark}[theorem]{Remark}
\numberwithin{equation}{section}
\def\DJ{\leavevmode\setbox0=\hbox{D}\kern0pt\rlap
 {\kern.04em\raise.188\ht0\hbox{-}}D}
\input{tcilatex}

\begin{document}
\title[Zamfirescu mappings in $A$-metric spaces ]{Fixed point results for
Zamfirescu mappings in $A$-metric spaces}
\keywords{$A$-metric spaces; Zamfirescu mappings; Fixed point}

\begin{abstract}
In the present paper, we extend the Zamfirescu results (\cite{16}) to $A$%
-metric spaces. Firstly, we define the notion of Zamfirescu mapping in $A$%
-metric spaces. After, we also obtain a fixed point theorem for such
mappings. The established results carry some well-known results from the
literature (see \cite{b}, \cite{c}, \cite{k}, \cite{16}) to $A$-metric
spaces. Appropriate example is also given.

\smallskip

\smallskip \noindent \textbf{2010 Mathematics Subject Classification:}
47H09; 47H10; 54H25
\end{abstract}

\author{Isa Yildirim}
\thanks{Isa Yildirim, Ataturk University, Department of Mathematics Faculty
of Science, 25240 Erzurum/TURKEY Email address: \textrm{%
isayildirim@atauni.edu.tr}}
\maketitle

\setcounter{page}{1}

\centerline{}

\centerline{}

\section{\protect\bigskip Introduction and Preliminaries}

Zamfirescu's fixed point theorem is one of the most important extensions of
Banach contraction principle. In 1972, Zamfirescu \cite{16} obtained the
following a very interesting fixed point theorem.

\begin{theorem}
\label{t1}Let $(X,d)$ be a complete metric space and $f:X\rightarrow X$ a
mapping for which there exists the real numbers $a,b$ and $c$ satisfying $%
a\in \left( 0,1\right) $;$b,c\in \left( 0,\frac{1}{2}\right) $ such that for
each pair $x,y\in X,$ at least one of the following conditions holds:

$\left( Z_{1}\right) $ $d(fx,fy)\leq ad(x,y);$

$\left( Z_{2}\right) $ $d(fx,fy)\leq b\left[ d(x,fx)+d(y,fy\right] ;$

$\left( Z_{3}\right) $ $d(fx,fy)\leq c\left[ d(x,fy)+d(y,fx\right] .$

Then $f$ has a unique fixed point $p$ and the Picard iteration $\left\{
x_{n}\right\} $ defined by $x_{n+1}=fx_{n}$ converges to $p$ for any
arbitrary but fixed $x_{0}\in X$.
\end{theorem}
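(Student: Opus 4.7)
The plan is to reduce the three alternative conditions to a single contractive-type inequality with a common rate $\delta<1$, and then run the standard Picard-iteration argument. Set
\[
\delta=\max\!\left\{a,\ \tfrac{b}{1-b},\ \tfrac{c}{1-c}\right\},
\]
which lies in $(0,1)$ because $a<1$ and $b,c<\tfrac12$.

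First, I would prove that for every $x,y\in X$,
\[
d(fx,fy)\leq \delta\,d(x,y)+2\delta\,d(x,fx).
\]
If $(Z_1)$ holds at the pair $(x,y)$, this is immediate. If $(Z_2)$ holds, I substitute the triangle bound $d(y,fy)\leq d(y,x)+d(x,fx)+d(fx,fy)$ into $(Z_2)$ and solve for $d(fx,fy)$, producing coefficients $\tfrac{b}{1-b}$ on $d(x,y)$ and $\tfrac{2b}{1-b}$ on $d(x,fx)$. If $(Z_3)$ holds, I bound $d(x,fy)\leq d(x,fx)+d(fx,fy)$ and $d(y,fx)\leq d(y,x)+d(x,fx)$ and again solve for $d(fx,fy)$, obtaining $\tfrac{c}{1-c}$ and $\tfrac{2c}{1-c}$. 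In every case both coefficients are dominated by $\delta$ and $2\delta$, respectively.

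Next I would apply this set-up to the Picard orbit $x_{n+1}=fx_n$. Rather than plug $(x_{n-1},x_n)$ directly into the unified inequality (which would introduce an unhelpful factor $3\delta$), I handle $(Z_1)$--$(Z_3)$ case by case for this specific pair: $(Z_2)$ yields $d(x_n,x_{n+1})\leq b[d(x_{n-1},x_n)+d(x_n,x_{n+1})]$, hence $d(x_n,x_{n+1})\leq\tfrac{b}{1-b}\,d(x_{n-1},x_n)$, and $(Z_3)$ analogously. Thus in every case
\[
d(x_n,x_{n+1})\leq \delta\,d(x_{n-1},x_n),
\]
so iteratively $d(x_n,x_{n+1})\leq\delta^{n}\,d(x_0,x_1)$ and a geometric-series estimate shows $\{x_n\}$ is Cauchy. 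By completeness it converges to some $p\in X$.

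Finally, to identify $p$ as the unique fixed point, I would use the unified inequality with $y=p$:
\[
d(x_{n+1},fp)=d(fx_n,fp)\leq \delta\,d(x_n,p)+2\delta\,d(x_n,fx_n)\longrightarrow 0,
\]
so $x_{n+1}\to fp$ and hence $fp=p$. Uniqueness is immediate: if $fp=p$ and $fq=q$, the unified inequality at $(x,y)=(p,q)$ gives $d(p,q)\leq\delta\,d(p,q)$, forcing $p=q$. The only genuinely technical step is the case analysis behind the unified inequality; convergence of the Picard iterates and uniqueness of the fixed point are then routine consequences.
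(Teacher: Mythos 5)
Your proof is correct, and its core device is exactly the one the paper uses for the general $A$-metric version (Lemma~\ref{l1} and Theorem~\ref{A}, of which Theorem~\ref{t1} is the case $t=2$): collapse $(Z_1)$--$(Z_3)$ into a single quasi-contractive inequality with the common rate $\delta=\max\{a,\,b/(1-b),\,c/(1-c)\}<1$, then run the Picard argument. The one genuine difference is how the orbit estimate $d(x_n,x_{n+1})\le\delta\,d(x_{n-1},x_n)$ is obtained. The paper derives \emph{two} unified inequalities, with auxiliary terms $d(x,fx)$ and $d(y,fx)$ respectively, and the point of the second one is precisely that along the Picard orbit (with $x=x_n$, $y=x_{n-1}$) the auxiliary term $d(fx_{n-1},x_n)$ vanishes, so no case analysis is needed there. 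You instead keep only the first unified inequality and, having correctly observed that it would give the useless factor $3\delta$ on the orbit, redo the case analysis $(Z_1)$--$(Z_3)$ for the specific pair $(x_{n-1},x_n)$. Both routes work; the paper's buys a cleaner orbit step at the cost of proving a second lemma-type inequality, yours keeps the lemma minimal at the cost of a second (short) case analysis. Your choice also quietly sidesteps a slip in the paper's displayed computation, where the auxiliary term $A(fx_n,\dots,fx_n,x_n)$ is written as $A(x_n,\dots,x_n,x_n)=0$ even though $fx_n=x_{n+1}$; the vanishing is only legitimate for the second inequality, which is the one your case-by-case argument effectively reproves on the orbit. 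Your identification of the fixed point via $x_{n+1}\to fp$ and uniqueness of limits is also slightly more economical than the paper's triangle-inequality estimate of $A(fp,\dots,fp,p)$, and the uniqueness argument is the same.
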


An operator $f$ satisfying the contractive conditions $\left( Z_{1}\right)
,\left( Z_{2}\right) $ and $\left( Z_{3}\right) $ in the above theorem is
called Zamfirescu mapping. Zamfirescu's theorem is a generalization of
Banach's, Kannan's and Chatterjea's fixed point theorems (see \cite{b}, \cite%
{c}, \cite{k}). Many researches studied to obtain new classes of contraction
mappings in different metric spaces. Some of them are $D^{\ast }$-metric
space (see \cite{21}), $G$-metric space (see \cite{a5}), $S$-metric space
(see \cite{a4}), $A$-metric space (see \cite{a}), etc. , as a generalization
of the usual metric space. These generalizations helped the development of
fixed point theory.

In the sequel, the letters $%
\mathbb{R}
$, $%
\mathbb{R}
^{+}$, and $%
\mathbb{N}
$ will denote the set of all real numbers, the set of all nonnegative real
numbers, and the set of all positive integers, respectively.

In 2006, Mustafa and Sims \cite{a5} introduced the notion of $G$-metric
space. After, Sedghi et al. \cite{a4} defined the concept of $D^{\ast }$%
-metric and $S$-metric spaces. Also, they proved some fixed point theorems
in such spaces. Every $G$-metric space is a $D^{\ast }$-metric space and
every $D^{\ast }$-metric space is an $S$-metric space. That is, $S$-metric
space is a generalization of the $G$-metric space and the $D^{\ast }$-metric
space.

In 2015, Abbas et al. \cite{a} introduced the concept of an $A$-metric space
as follows:

\begin{definition}
Let $X$ be nonempty set. Suppose a mapping $A:X^{t}\rightarrow 
\mathbb{R}
$ satisfy the following conditions:

$\left( A_{1}\right) $ $A(x_{1},x_{2},...,x_{t-1},x_{t})\geq 0$ $,$

$\left( A_{2}\right) $ $A(x_{1},x_{2},...,x_{t-1},x_{t})=0$ if and only if $%
x_{1}=x_{2}=...=x_{t-1}=x_{t},$

$\left( A_{3}\right) $ $A(x_{1},x_{2},...,x_{t-1},x_{t})\leq
A(x_{1},x_{1},...,(x_{1})_{t-1},y)+A(x_{2},x_{2},...,(x_{2})_{t-1},y)+...+A(x_{t-1},x_{t-1},...,(x_{t-1})_{t-1},y)+A(x_{t},x_{t},...,(x_{t})_{t-1},y) 
$

for any $x_{i},y\in X,$ $(i=1,2,...,t).$ Then, $(X,A)$ is said to be an $A$%
-metric space.
\end{definition}

It is clear that the an $A$-metric space for $t=2$ reduces to ordinary
metric $d$ and an $A$-metric space for $t=3$ reduces to $S$-metric spaces.
So, an $A$-metric space is a generalization of the $G$-metric space, the $%
D^{\ast }$-metric space and the $S$-metric space.

\begin{example}
\cite{a} Let $X=%
\mathbb{R}
$. Define a function $A:X^{t}\rightarrow 
\mathbb{R}
$ by%
\begin{eqnarray*}
A(x_{1},x_{2},...,x_{t-1},x_{t}) &=&\left\vert x_{1}-x_{2}\right\vert
+\left\vert x_{1}-x_{3}\right\vert +...+\left\vert x_{1}-x_{t}\right\vert  \\
&&+\left\vert x_{2}-x_{3}\right\vert +\left\vert x_{2}-x_{4}\right\vert
+...+\left\vert x_{2}-x_{t}\right\vert  \\
&&\vdots  \\
&&+\left\vert x_{t-2}-x_{t-1}\right\vert +\left\vert
x_{t-2}-x_{t}\right\vert +\left\vert x_{t-1}-x_{t}\right\vert  \\
&=&\sum_{i=1}^{t}\sum_{i<j}\left\vert x_{i}-x_{j}\right\vert \text{.}
\end{eqnarray*}

Then $(X,A)$ is an $A$-metric space.
\end{example}

In 2017, Fernandez et al. \cite{fer} introduced the generalized Lipschitz
mapping, Chatterjea's and Kannan's mappings in an $A$-cone metric space over
Banach algebra. Also, they proved some fixed point theorems for the above
mappings in complete $A$-cone metric space $\left( X,A\right) $ over Banach
algebra.

Next,we state the following useful lemmas and definition.

\begin{lemma}
\label{l2} \cite{a} Let $\left( X,A\right) $ be $A$-metric space. Then $%
A\left( x,x,\dots ,x,y\right) =A\left( y,y,\dots ,y,x\right) $ for all $%
x,y\in X$.
\end{lemma}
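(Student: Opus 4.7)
The plan is to use the triangle-type axiom $(A_3)$ with a judiciously chosen anchor point and then exploit the symmetry of the desired identity. Write $t-1$ copies of $x$ followed by $y$ as the tuple $(x,x,\ldots,x,y)$, and note that axiom $(A_3)$ lets us bound $A(x_1,\ldots,x_t)$ by a sum of $t$ terms of the form $A(x_i,x_i,\ldots,x_i,z)$ for \emph{any} common point $z \in X$. The key observation is that by selecting $z$ equal to one of the repeated entries, most of these terms will collapse to zero via $(A_2)$.

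First I would apply $(A_3)$ to the tuple $(x,x,\ldots,x,y)$ (with $x$ repeated $t-1$ times) choosing the anchor point $z=x$. Since the first $t-1$ summands become $A(x,x,\ldots,x,x)$, axiom $(A_2)$ forces each of them to vanish. Only the last summand survives, which is exactly $A(y,y,\ldots,y,x)$. This yields
\begin{equation*}
A(x,x,\ldots,x,y) \;\leq\; A(y,y,\ldots,y,x).
\end{equation*}

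Next I would simply repeat the argument with the roles of $x$ and $y$ reversed: apply $(A_3)$ to $(y,y,\ldots,y,x)$ with anchor $z=y$, use $(A_2)$ to kill the first $t-1$ summands, and obtain the reverse inequality
\begin{equation*}
A(y,y,\ldots,y,x) \;\leq\; A(x,x,\ldots,x,y).
\end{equation*}
Combining the two inequalities yields the claimed equality for all $x,y \in X$.

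There is no real obstacle here; the argument is a one-line application of $(A_3)$ in each direction. The only point requiring a little care is the correct parsing of the indexing in $(A_3)$: one must verify that selecting the anchor $z$ to coincide with the $(t-1)$-fold repeated entry indeed annihilates the first $t-1$ summands, leaving a single nontrivial term whose arguments are precisely the transposed configuration we want.
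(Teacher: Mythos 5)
Your argument is correct: applying $(A_3)$ to $(x,x,\dots,x,y)$ with anchor $z=x$ kills the first $t-1$ summands via $(A_2)$ and leaves exactly $A(y,y,\dots,y,x)$, and swapping roles gives the reverse inequality. The paper itself gives no proof (it only cites Abbas et al.), and your argument is precisely the standard one from that source, so there is nothing to add.
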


\begin{lemma}
\label{l3} \cite{a} Let $\left( X,A\right) $ be $A$-metric space. Then for
all for all $x,y\in X$ we have $A\left( x,x,\dots ,x,z\right) \leq \left(
t-1\right) A\left( x,x,\dots ,x,y\right) +A\left( z,z,\dots ,z,y\right) $
and $A\left( x,x,\dots ,x,z\right) \leq \left( t-1\right) A\left( x,x,\dots
,x,y\right) +A\left( y,y,\dots ,y,z\right) $.
\end{lemma}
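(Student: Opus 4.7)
The plan is to derive both inequalities directly from axiom $(A_3)$, using the symmetry in Lemma \ref{l2} to bridge between the two forms.

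For the first inequality, I would specialise $(A_3)$ to the $t$-tuple given by $x_1=x_2=\cdots=x_{t-1}=x$ and $x_t=z$, and take the auxiliary comparison point on the right-hand side to be $y$. With this substitution, the left-hand side of $(A_3)$ becomes $A(x,x,\dots,x,z)$. On the right-hand side, the first $t-1$ summands each collapse to $A(x,x,\dots,x,y)$ (since each is obtained by replacing every entry of the corresponding slot with $x$ and comparing against $y$), while the final summand is $A(z,z,\dots,z,y)$. Adding these yields exactly $(t-1)A(x,x,\dots,x,y)+A(z,z,\dots,z,y)$, which matches the stated bound.

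For the second inequality, I would invoke Lemma \ref{l2} to rewrite $A(z,z,\dots,z,y)$ as $A(y,y,\dots,y,z)$; substituting this into the first inequality produces the second.

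The only genuine point of care is the careful reading of $(A_3)$: the $i$-th summand on its right-hand side is $A(x_i,x_i,\dots,x_i,y)$, with $x_i$ repeated $t-1$ times followed by $y$, so the specialisation really does give $t-1$ identical copies of $A(x,x,\dots,x,y)$. Once this bookkeeping is in place, there is no real obstacle; the lemma is essentially an immediate corollary of the rectangle-type inequality $(A_3)$ together with the interchange symmetry recorded in Lemma \ref{l2}.
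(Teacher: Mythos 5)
Your proof is correct: the paper cites this lemma from the reference on $A$-metric spaces without reproducing a proof, and your derivation — specialising $(A_3)$ to $x_1=\cdots=x_{t-1}=x$, $x_t=z$ with comparison point $y$ to get the first inequality, then applying the symmetry of Lemma \ref{l2} to the last term to get the second — is exactly the standard argument. The bookkeeping you flag (that the first $t-1$ summands of $(A_3)$ all collapse to $A(x,x,\dots,x,y)$) is handled correctly.
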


\begin{definition}
\cite{a} Let $\left( X,A\right) $ be $A$-metric space.

(i) A sequence $\left\{ u_{k}\right\} $ in $X$ is said to converge to a
point $u\in X$ if $A\left( u_{k},u_{k},\dots ,u_{k},u\right) \rightarrow 0$
as $k\rightarrow \infty $.

(ii) A sequence $\left\{ u_{k}\right\} $ in $X$ is called a Cauchy sequence
if $A\left( u_{k},u_{k},\dots ,u_{k},u_{m}\right) \rightarrow 0$ as $%
k,m\rightarrow \infty $.

(iii) The $A$-metric space $\left( X,A\right) $ is said to be complete if
every Cauchy sequence in $X\ $is convergent.
\end{definition}

\section{Main Results}

\noindent In this section, following the ideas of T. Zamfirescu \cite{16} we
first introduce the notion of Zamfirescu mappings in $A$-metric space as
follows:

\begin{definition}
\label{d1} Let $\left( X,A\right) $ be $A$-metric space and $f:X\rightarrow
X $ be a mapping. $f$ is called a $A$-Zamfirescu mapping ($AZ$ mapping), if
and only if, there are real numbers, $0\leq a<1$, $0\leq b,c<\frac{1}{t}$
such that for all $x,y\in X$, at least one of the next conditions is true: 
\begin{equation*}
\left( AZ_{1}\right) A(fx,fx,\dots ,fx,fy)\leq aA(x,x,\dots ,x,y)
\end{equation*}%
\begin{equation*}
\left( AZ_{2}\right) A(fx,fx,\dots ,fx,fy)\leq b\left[ A(fx,fx,\dots
,fx,x)+A(fy,fy,\dots ,fy,y)\right]
\end{equation*}%
\begin{equation*}
\left( AZ_{3}\right) A(fx,fx,\dots ,fx,fy)\leq c\left[ A(fx,fx,\dots
,fx,y)+A(fy,fy,\dots ,fy,x)\right]
\end{equation*}
\end{definition}

\noindent It is clear that if we take $t=2$ in the Definition \ref{d1}, we
obtain the definition of Zamfirescu \cite{16} in ordinary metric space.

\noindent Before giving the our main result in $A$-metric space, we need the
following significant lemma.

\begin{lemma}
\label{l1} Let $\left( X,A\right) $ be $A$-metric space and $f:X\rightarrow
X $ be a mapping. If $f$ is a $AZ$ mapping, then there is $0\leq \delta <1$
such that 
\begin{equation}
A\left( fx,fx,\dots ,fx,fy\right) \leq \delta A\left( x,x,\dots ,x,y\right)
+t\delta A(fx,fx,\dots ,fx,x)  \label{GrindEQ__1_}
\end{equation}%
and 
\begin{equation}
A\left( fx,fx,\dots ,fx,fy\right) \leq \delta A\left( x,x,\dots ,x,y\right)
+t\delta A(fy,fy,\dots ,fy,x)  \label{GrindEQ__2_}
\end{equation}%
for all $x,y\in X$.
\end{lemma}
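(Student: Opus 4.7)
\medskip

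\noindent\textbf{Proof plan for Lemma \ref{l1}.}
The plan is to fix an arbitrary pair $x,y\in X$, split the argument into three cases according to which of $(AZ_{1})$, $(AZ_{2})$, $(AZ_{3})$ happens to hold at $(x,y)$, and show that in every case both \eqref{GrindEQ__1_} and \eqref{GrindEQ__2_} are satisfied with a single constant
$$\delta:=\max\!\left\{a,\;\frac{b}{1-b(t-1)},\;\frac{c}{1-c(t-1)}\right\}.$$
Since $b,c<\tfrac{1}{t}$, we have $b(t-1),c(t-1)<\tfrac{t-1}{t}<1$, so all three numbers in the maximum lie in $[0,1)$ and therefore $\delta\in[0,1)$. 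The case $(AZ_{1})$ is immediate: the right-hand side $aA(x,\dots,x,y)$ is already of the required form (the extra nonnegative term $t\delta A(fx,\dots,fx,x)$ or $t\delta A(fy,\dots,fy,x)$ is simply added on).

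For case $(AZ_{2})$, the plan is to absorb the term $A(fy,\dots,fy,y)$ appearing on the right. For \eqref{GrindEQ__1_} I apply Lemma \ref{l3} to write
$A(fy,\dots,fy,y)\le (t-1)A(fy,\dots,fy,fx)+A(y,\dots,y,fx)$, then apply Lemma \ref{l3} again to $A(fx,\dots,fx,y)$, and use Lemma \ref{l2} to rewrite $A(fy,\dots,fy,fx)=A(fx,\dots,fx,fy)$ and $A(y,\dots,y,fx)=A(fx,\dots,fx,y)$. The inequality $(AZ_{2})$ then becomes
$$A(fx,\dots,fx,fy)\le b(t-1)A(fx,\dots,fx,fy)+btA(fx,\dots,fx,x)+bA(x,\dots,x,y),$$
and dividing by $1-b(t-1)>0$ yields \eqref{GrindEQ__1_} with constant $\tfrac{b}{1-b(t-1)}\le\delta$ (the coefficient of $A(fx,\dots,fx,x)$ becomes exactly $t$ times the coefficient of $A(x,\dots,x,y)$, matching the required form $\delta,\,t\delta$). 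For \eqref{GrindEQ__2_}, the same algebraic scheme applies, but this time I bound $A(fy,\dots,fy,y)\le(t-1)A(fy,\dots,fy,x)+A(x,\dots,x,y)$ and $A(fx,\dots,fx,x)\le(t-1)A(fx,\dots,fx,fy)+A(fy,\dots,fy,x)$, again via Lemmas \ref{l2}--\ref{l3}, and rearrange.

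Case $(AZ_{3})$ is handled identically in spirit: expand $A(fx,\dots,fx,y)$ and $A(fy,\dots,fy,x)$ through Lemma \ref{l3} (introducing intermediate point $x$ for the first inequality and $fy$ for the second), use Lemma \ref{l2} to flip the order of arguments where necessary, collect all $A(fx,\dots,fx,fy)$ terms on the left, and divide by $1-c(t-1)>0$ to get constant $\tfrac{c}{1-c(t-1)}\le\delta$. The only real obstacle is bookkeeping: one has to choose the intermediate point in Lemma \ref{l3} carefully so that after one or two applications the only ``bad'' terms that appear are multiples of $A(fx,\dots,fx,fy)$ (which can be absorbed on the left) and the desired right-hand side terms $A(x,\dots,x,y)$ together with either $A(fx,\dots,fx,x)$ or $A(fy,\dots,fy,x)$. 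Once the correct intermediate points are picked, the ratio of the two resulting coefficients is always $1:t$, which is exactly why the statement of the lemma has the form $\delta,\,t\delta$.
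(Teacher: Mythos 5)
Your proposal is correct and uses the same machinery as the paper: the same constant $\delta=\max\{a,\ b/(1-b(t-1)),\ c/(1-c(t-1))\}$, the same case split over $(AZ_1)$, $(AZ_2)$, $(AZ_3)$, and the same device of expanding via Lemma \ref{l3}, flipping arguments via Lemma \ref{l2}, absorbing the resulting multiple of $A(fx,\dots,fx,fy)$ on the left, and dividing by $1-b(t-1)$ or $1-c(t-1)$. The one genuine difference is in your favour: you establish \emph{both} \eqref{GrindEQ__1_} and \eqref{GrindEQ__2_} in each of the cases $(AZ_2)$ and $(AZ_3)$ by making two different choices of intermediate points, and your choices do deliver the required right-hand sides (for instance, in case $(AZ_2)$ your bounds $A(fy,\dots,fy,y)\le(t-1)A(fy,\dots,fy,x)+A(x,\dots,x,y)$ and $A(fx,\dots,fx,x)\le(t-1)A(fx,\dots,fx,fy)+A(fy,\dots,fy,x)$ yield $(1-b(t-1))A(fx,\dots,fx,fy)\le bA(x,\dots,x,y)+btA(fy,\dots,fy,x)$, which is exactly \eqref{GrindEQ__2_} after division). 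The paper, by contrast, derives only one inequality per case, and in case $(AZ_3)$ its chain terminates with the term $\tfrac{tc}{1-c(t-1)}A(fx,\dots,fx,y)$ on the right, which is literally neither $A(fx,\dots,fx,x)$ nor $A(fy,\dots,fy,x)$; the passage from there to the two stated conclusions is simply asserted. So your extra bookkeeping is not redundant pedantry --- it closes a real gap in the published argument, at the cost only of writing out roughly twice as many triangle-inequality applications.
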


\begin{proof}
Let's assume that $\left( AZ_{2}\right) $ is hold. From Lemma \ref{l3}, we
have%
\begin{eqnarray*}
A(fx,fx,\dots ,fx,fy) &\leq &b\left[ A(fx,fx,\dots ,fx,x)+A(fy,fy,\dots
,fy,y)\right] \\
&\leq &b\left[ 
\begin{array}{c}
A\left( fx,fx,\dots ,fx,x\right) +\left( t-1\right) A\left( fy,fy,\dots
,fy,fx\right) \\ 
+A(fx,fx,\dots ,fx,y)%
\end{array}%
\right] \\
&\leq &b\left[ A\left( fx,fx,\dots ,fx,x\right) +\left( t-1\right) A\left(
fy,fy,\dots ,fy,fx\right) \right. \\
&&\left. +\left( t-1\right) A\left( fx,fx,\dots ,fx,x\right) +A(x,x,\dots
,x,y)\right]
\end{eqnarray*}%
thus, 
\begin{equation*}
\left( 1-b\left( t-1\right) \right) A\left( fx,fx,\dots ,fx,fy\right) \leq
tbA\left( fx,fx,\dots ,fx,x\right) +bA(x,x,\dots ,x,y).
\end{equation*}%
From the fact that $0\leq b<\frac{1}{t}$, we get 
\begin{equation*}
A\left( fx,fx,\dots ,fx,fy\right) \leq \frac{b}{1-b\left( t-1\right) }%
A\left( x,x,\dots ,x,y\right) +\frac{tb}{1-b\left( t-1\right) }A\left(
fx,fx,\dots ,fx,x\right) .
\end{equation*}%
Assume that $\left( AZ_{3}\right) $ is hold. From Lemmas \ref{l2} and \ref%
{l3}, similarly we get%
\begin{eqnarray*}
A(fx,fx,\dots ,fx,fy) &\leq &c\left[ A(fx,fx,\dots ,fx,y)+A(fy,fy,\dots
,fy,x)\right] \\
&\leq &c\left[ 
\begin{array}{c}
A\left( fx,fx,\dots ,fx,y\right) +\left( t-1\right) A\left( fy,fy,\dots
,fy,fx\right) \\ 
+A\left( fx,fx,\dots ,fx,x\right)%
\end{array}%
\right] \\
&\leq &c\left[ A\left( fx,fx,\dots ,fx,y\right) +\left( t-1\right) A\left(
fy,fy,\dots ,fy,fx\right) \right. \\
&&\left. +\left( t-1\right) A\left( fx,fx,\dots ,fx,y\right) +A(y,y,\dots
,y,x)\right]
\end{eqnarray*}%
thus, 
\begin{equation*}
\left( 1-c\left( t-1\right) \right) A\left( fx,fx,\dots ,fx,fy\right) \leq
tcA\left( fx,fx,\dots ,fx,y\right) +cA(x,x,\dots ,x,y).
\end{equation*}%
From the fact that $0\leq c<\frac{1}{t}$, we get 
\begin{equation*}
A\left( fx,fx,\dots ,fx,fy\right) \leq \frac{c}{1-c\left( t-1\right) }%
A\left( x,x,\dots ,x,y\right) +\frac{tc}{1-c\left( t-1\right) }A\left(
fx,fx,\dots ,fx,y\right) .
\end{equation*}%
Therefore, denoting by 
\begin{equation*}
\delta =max\left\{ a,\frac{b}{1-b\left( t-1\right) },\frac{c}{1-c\left(
t-1\right) }\right\} ,
\end{equation*}%
we have that $0\leq \delta <1$. Thus, the following inequalities hold: 
\begin{equation}
A\left( fx,fx,\dots ,fx,fy\right) \leq \delta A\left( x,x,\dots ,x,y\right)
+t\delta A(fx,fx,\dots ,fx,x)  \label{1}
\end{equation}%
and 
\begin{equation}
A\left( fx,fx,\dots ,fx,fy\right) \leq \delta A\left( x,x,\dots ,x,y\right)
+t\delta A(fy,fy,\dots ,fy,x)  \label{2}
\end{equation}%
for all $x,y\in X$.
\end{proof}

\begin{theorem}
\label{A} \noindent Let $\left( X,A\right) $ be complete $A$-metric space
and $f:X\rightarrow X$ be an$\ AZ$ mapping. Then $f$ has a unique fixed
point and Picard iteration process $\left\{ x_{n}\right\} $ defined by $%
x_{n+1}=fx_{n}$ converges to a fixed point of $f$.
\end{theorem}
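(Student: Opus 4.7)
The proof I propose follows the classical Picard-iteration scheme, with the two inequalities provided by Lemma~\ref{l1} doing all of the real work. Fix $x_0\in X$ arbitrarily and set $x_{n+1}=fx_n$. The first step is to establish the geometric estimate
\[
A(x_n,x_n,\dots,x_n,x_{n+1})\leq \delta^{n}\,A(x_0,x_0,\dots,x_0,x_1).
\]
The key observation, and in my view the only delicate point of the whole argument, is to apply the second inequality of Lemma~\ref{l1} with $x=x_n$ and $y=x_{n-1}$ (rather than the apparently natural choice $x=x_{n-1}$, $y=x_n$). Because $fx_{n-1}=x_n$, the second summand on the right becomes $t\delta\,A(fx_{n-1},\dots,fx_{n-1},x_n)=t\delta\,A(x_n,\dots,x_n,x_n)=0$, and an application of Lemma~\ref{l2} to handle the symmetry of the remaining terms collapses everything to $A(x_n,\dots,x_n,x_{n+1})\leq \delta\,A(x_{n-1},\dots,x_{n-1},x_n)$. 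Induction then yields the displayed geometric bound.

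Second, to show that $\{x_n\}$ is Cauchy, I would telescope $A(x_n,\dots,x_n,x_m)$ for $m>n$ by applying the tetrahedral inequality of Lemma~\ref{l3} successively with intermediate points $x_{n+1},x_{n+2},\dots,x_{m-1}$. This produces a bound of the form $(t-1)\sum_{k=n}^{m-2}\delta^{k}\,A(x_0,\dots,x_0,x_1)+\delta^{m-1}\,A(x_0,\dots,x_0,x_1)$, and since $\delta<1$ the right-hand side tends to $0$ as $n,m\to\infty$. Completeness of $(X,A)$ then delivers a limit $p\in X$ with $A(x_n,\dots,x_n,p)\to 0$.

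Third, to identify $p$ as a fixed point, I apply the first inequality of Lemma~\ref{l1} with $x=x_n$ and $y=p$, obtaining
\[
A(x_{n+1},\dots,x_{n+1},fp)\leq \delta\,A(x_n,\dots,x_n,p)+t\delta\,A(x_{n+1},\dots,x_{n+1},x_n).
\]
Both terms on the right tend to $0$ (the first by the convergence just obtained, the second by the geometric estimate of the first step), so $x_{n+1}\to fp$ in the $A$-metric. Combining this with $x_{n+1}\to p$ via a single application of Lemma~\ref{l3} forces $A(p,\dots,p,fp)=0$, whence $p=fp$ by axiom $(A_2)$. Uniqueness is then immediate: if $p$ and $q$ are two fixed points, the first inequality of Lemma~\ref{l1} evaluated at $x=p$, $y=q$ degenerates into $A(p,\dots,p,q)\leq \delta\,A(p,\dots,p,q)$, which since $\delta<1$ forces $A(p,\dots,p,q)=0$ and hence $p=q$.

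The principal obstacle, as already highlighted, lies in the first step: one must spot that the asymmetric term in the second inequality of Lemma~\ref{l1} can be made to vanish by the correct substitution, so that a genuine contractive recursion with coefficient $\delta$ survives (and not the weaker factor $(1+t)\delta$ that a naive substitution would produce, which would fail to be less than $1$). Once this is in hand, the rest of the argument is a routine adaptation of the classical Picard machinery to the $A$-metric setting.
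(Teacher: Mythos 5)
Your proposal is correct and follows essentially the same route as the paper: the geometric estimate obtained by substituting $x=x_n$, $y=x_{n-1}$ so that the extra term of Lemma~\ref{l1} vanishes, the telescoping Cauchy argument via Lemma~\ref{l3}, and the identical uniqueness step. The only cosmetic difference is in identifying the limit as a fixed point, where you bound $A(x_{n+1},\dots,x_{n+1},fp)$ directly and then combine the two limits with Lemma~\ref{l3}, whereas the paper starts from $A(fp,fp,\dots,fp,p)$ and applies Lemma~\ref{l3} first; the two are equivalent.
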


\begin{proof}
Let $x_{0}\in X$ be arbitrary and $\left\{ x_{n}\right\} $ be the Picard
iteration as $x_{n+1}=fx_{n}$.

If we take $x=x_{n}$ and $y=x_{n-1}$ at the inequality (\ref{1}), we obtain
that 
\begin{eqnarray*}
A\left( x_{n+1},x_{n+1},\dots ,x_{n+1},x_{n}\right) &=&A\left(
fx_{n},fx_{n},\dots ,fx_{n},fx_{n-1}\right) \\
&\leq &\delta A\left( x_{n},x_{n},\dots ,x_{n},x_{n-1}\right) +t\delta
A\left( x_{n},x_{n},\dots ,x_{n},x_{n}\right) .
\end{eqnarray*}%
From above the inequality, we get 
\begin{eqnarray}
A\left( x_{n+1},x_{n+1},\dots ,x_{n+1},x_{n}\right) &\leq &\delta A\left(
x_{n},x_{n},\dots ,x_{n},x_{n-1}\right)  \label{3} \\
&\leq &{\delta }^{2}A\left( x_{n-1},x_{n-1},\dots ,x_{n-1},x_{n-2}\right) 
\notag \\
&&\vdots  \notag \\
&\leq &{\delta }^{n}A\left( x_{1},x_{1},\dots ,x_{1},x_{0}\right)  \notag \\
&=&{\delta }^{n}A\left( x_{0},x_{0},\dots ,x_{0},x_{1}\right) .  \notag
\end{eqnarray}%
Let $m>n$. Using Lemma \ref{l3} and the above inequality, we get%
\begin{eqnarray*}
A\left( x_{n},x_{n},\dots ,x_{n},x_{m}\right) &\leq &\left( t-1\right)
\delta A\left( x_{n},x_{n},\dots ,x_{n},x_{n+1}\right) +A\left(
x_{n+1},x_{n+1},\dots ,x_{n+1},x_{m}\right) \\
&\leq &\left( t-1\right) \delta A\left( x_{n},x_{n},\dots
,x_{n},x_{n+1}\right) +\left( t-1\right) A\left( x_{n+1},x_{n+1},\dots
,x_{n+1},x_{n+2}\right) \\
&&+A\left( x_{n+2},x_{n+2},\dots ,x_{n+2},x_{m}\right) \\
&\leq &\left( t-1\right) \delta A\left( x_{n},x_{n},\dots
,x_{n},x_{n+1}\right) +\left( t-1\right) A\left( x_{n+1},x_{n+1},\dots
,x_{n+1},x_{n+2}\right) \\
&&+\cdots +\left( t-1\right) A\left( x_{m-2},x_{m-2},\dots
,x_{m-2},x_{m-1}\right) \\
&&+A\left( x_{m-1},x_{m-1},\dots ,x_{m-1},x_{m}\right) \\
&=&\left( t-1\right) \left[ {\delta }^{n}A\left( x_{0},x_{0},\dots
,x_{0},x_{1}\right) +\right. {\delta }^{n+1}A\left( x_{0},x_{0},\dots
,x_{0},x_{1}\right) \\
&&\left. +\cdots +{\delta }^{m-2}A\left( x_{0},x_{0},\dots
,x_{0},x_{1}\right) \right] +{\delta }^{m-1}A\left( x_{0},x_{0},\dots
,x_{0},x_{1}\right) \\
&=&\left( t-1\right) {\delta }^{n}A\left( x_{0},x_{0},\dots
,x_{0},x_{1}\right) \left[ 1+\delta +{\delta }^{2}+\cdots +{\delta }^{m-n-2}%
\right] \\
&&+{\delta }^{m-1}A\left( x_{0},x_{0},\dots ,x_{0},x_{1}\right) \\
&\leq &\left( t-1\right) {\delta }^{n}A\left( x_{0},x_{0},\dots
,x_{0},x_{1}\right) \frac{{\delta }^{m}}{1-\delta }+{\delta }^{m-1}A\left(
x_{0},x_{0},\dots ,x_{0},x_{1}\right) \\
&=&\left[ \left( t-1\right) \frac{{\delta }^{m+n}}{1-\delta }+{\delta }^{m-1}%
\right] A\left( x_{0},x_{0},\dots ,x_{0},x_{1}\right)
\end{eqnarray*}

We know that $0\leq \delta <1$ from Lemma \ref{l1}. Suppose that $A\left(
x_{0},x_{0},\dots ,x_{0},x_{1}\right) >0$. If we take limit as $%
m,n\rightarrow \infty $ in above inequality we get%
\begin{equation*}
{\mathop{\mathrm{lim}}_{n,m\rightarrow \infty }A\left( x_{n},x_{n},\dots
,x_{n},x_{m}\right) =0}\text{.}
\end{equation*}%
Therefore $\left\{ x_{n}\right\} $ is a Cauchy sequence in $X$. Also, assume
that $A\left( x_{0},x_{0},\dots ,x_{0},x_{1}\right) =0$, then $A\left(
x_{n},x_{n},\dots ,x_{n},x_{m}\right) =0$ for all $m>n$ and $\left\{
x_{n}\right\} $ is a Cauchy sequence in $X.$ Since $\left( X,A\right) $ is a
complete metric space, $x_{n}\rightarrow p\in X$ as $n\rightarrow \infty $.

We show that $p$ is a fixed point of $f$. From (\ref{2}), we have 
\begin{eqnarray*}
A\left( fp,fp,\dots ,fp,p\right) &\leq &\left( t-1\right) A\left(
fp,fp,\dots ,fp,fx_{n}\right) +A\left( fx_{n},fx_{n},\dots ,fx_{n},p\right)
\\
&\leq &\left( t-1\right) \left[ \delta A\left( p,p,\dots ,p,x_{n}\right)
+t\delta A\left( p,p,\dots ,p,{fx}_{n}\right) \right] \\
&&+A\left( x_{n+1},x_{n+1},\dots ,x_{n+1},p\right) \\
&=&\left( t-1\right) \left[ \delta A\left( p,p,\dots ,p,x_{n}\right)
+t\delta A\left( p,p,\dots ,p,x_{n+1}\right) \right] \\
&&+A\left( x_{n+1},x_{n+1},\dots ,x_{n+1},p\right) .
\end{eqnarray*}%
If we take limit for $n\rightarrow \infty $ in above inequality, we obtain
that $fp=p$. That is, $p$ is a fixed point of the mapping $f$. Now, we show
that the uniqueness of fixed point of $f$. Assume that $p$ and $q$ are fixed
point of $f$. That is, $fp=p$ and $fq=q$. From (\ref{1}), we have 
\begin{equation*}
A\left( p,p,\dots ,p,q\right) =A\left( fp,fp,\dots ,fp,fq\right) \leq \delta
A\left( p,p,\dots ,p,q\right) +t\delta A\left( fp,fp,\dots ,fp,p\right)
\end{equation*}%
thus,%
\begin{equation*}
A\left( p,p,\dots ,p,q\right) \leq \delta A\left( p,p,\dots ,p,q\right) .
\end{equation*}%
This implies that $A\left( p,p,\dots ,p,q\right) =0\Longrightarrow $ $p=q$
and hence, $f$ has a unique fixed point in $X$.
\end{proof}

\begin{remark}
Putting $t=2$ in Theorem \ref{A}, we obtain the Theorem \ref{t1}. Hence,
Theorem \ref{A} is a generalization of Theorem \ref{t1} of Zamfirescu \cite%
{16} in $A$-metric space.
\end{remark}

\begin{example}
Let $X=%
\mathbb{R}
$. Define a function $A:X^{t}\rightarrow \left[ 0,\infty \right) $ by%
\begin{equation*}
A(x_{1},x_{2},...,x_{t-1},x_{t})=\sum_{i=1}^{t}\sum_{i<j}\left\vert
x_{i}-x_{j}\right\vert 
\end{equation*}

for all $x_{i}\in X$, \ $i=1,2,...,t$. Then $\left( X,A\right) $ is complete 
$A$-metric space.

If we define $f:X\rightarrow X$ by $fx=\frac{2x}{7}$, then $f$ satisfy the
conditions of Theorem \ref{A}. For all $x_{i}\in X$, \ $i=1,2,...,t$,%
\begin{eqnarray*}
A(fx_{1},fx_{2},...,fx_{t-1},fx_{t}) &=&A(\frac{2x_{1}}{7},\frac{2x_{2}}{7}%
,...,\frac{2x_{t-1}}{7},\frac{2x_{t}}{7}) \\
&=&\frac{2}{7}\left\vert x_{1}-x_{2}\right\vert +\frac{2}{7}\left\vert
x_{1}-x_{3}\right\vert +...+\frac{2}{7}\left\vert x_{1}-x_{t}\right\vert  \\
&&+\frac{2}{7}\left\vert x_{2}-x_{3}\right\vert +\frac{2}{7}\left\vert
x_{2}-x_{4}\right\vert +...+\frac{2}{7}\left\vert x_{2}-x_{t}\right\vert  \\
&&\vdots  \\
&&+\frac{2}{7}\left\vert x_{t-2}-x_{t-1}\right\vert +\frac{2}{7}\left\vert
x_{t-2}-x_{t}\right\vert +\frac{2}{7}\left\vert x_{t-1}-x_{t}\right\vert  \\
&=&\frac{2}{7}\sum_{i=1}^{t}\sum_{i<j}\left\vert x_{i}-x_{j}\right\vert  \\
&=&\frac{2}{7}A(x_{1},x_{2},...,x_{t-1},x_{t})
\end{eqnarray*}%
where $\frac{2}{7}\in \left[ 0,1\right) $. This implies that $f$ is a $AZ$
mapping. And $x=0$ is the unique fixed point of $f$ in $X$ as asserted by
Theorem \ref{A}.
\end{example}

\end{document}